\newcommand{\Dbar}{\stackinset{l}{0.1ex}{c}{}{\rule{0.33em}{0.3pt}}{D}}
\def\le{\leqslant}
\def\ge{\geqslant}
\def\leq{\leqslant}
\def\geq{\geqslant}
\def\Case#1#2{%
\smallskip\paragraph{\textbf{\boldmath Case #1: #2}}\hfil\break\ignorespaces}
\def\Subsubcase#1#2{%
\smallskip\paragraph{\textit{$\bullet$ Subcase #1: #2}}\hfil\break\ignorespaces}
\newtheorem{theorem}{Theorem}
\newtheorem{lemma}[theorem]{Lemma}
\newcommand{\Z}{\mathbb{Z}}
\newcommand{\R}{\mathbb{R}}
\newcommand{\N}{\mathbb{N}}
\newcommand{\e}{\operatorname{e}}
\numberwithin{equation}{section}
\numberwithin{theorem}{section}
\newcommand{\QQ}{\mathbb{Q}}
\newcommand{\Fq}{\mathbb{F}_q}
\def\cG{{\mathcal G}}
\def\cI{{\mathcal I}}
\def\cJ{{\mathcal J}}
\def\cR{{\mathcal R}}
\def\cS{{\mathcal S}}
\def\fA{{\mathfrak A}}
\def\fB{{\mathfrak B}}
\def\fS{{\mathfrak S}}
\def \balpha{\bm{\alpha}}
\def \bbeta{\bm{\beta}}
\def \bgamma{\bm{\gamma}}
\def\ov\QQ{\overline{\QQ}}
\def\e{\mathbf{e}}
\def\eq{{\mathbf{\,e}}_q}
\def\em{{\mathbf{\,e}}_m}
\def\mand{\qquad\mbox{and}\qquad}
\def\({\left(}
\def\){\right)}
\def\rf#1{\left\lceil#1\right\rceil}
\begin{document}

\title[Distribution of modular square roots of primes] 
{On the distribution of modular square roots of primes}

 \author[I. D. Shkredov]{Ilya D. Shkredov}
\address{I.D.S: Steklov Mathematical Institute of Russian Academy
of Sciences, ul. Gubkina 8, Moscow, Russia, 119991; \  Institute for Information Transmission Problems  of Russian Academy
of Sciences, Bolshoy Karet\-ny Per. 19, Moscow, Russia, 127994; \ 
Moscow Institute of Physics and Technology, Institutskii per. 9, Dolgoprudnii, Russia, 141701}
\email{ilya.shkredov@gmail.com}

 \author[I.~E.~Shparlinski]{Igor E. Shparlinski}
 \address{I.E.S.: School of Mathematics and Statistics, University of New South Wales.
 Sydney, NSW 2052, Australia}
 \email{igor.shparlinski@unsw.edu.au}
 
 \author[A. Zaharescu]{Alexandru Zaharescu} 
 \address{A.Z.: Department of Mathematics, University of Illinois at
Urbana-Cham\-paign  1409 West Green Street, Urbana, IL 61801, USA;   \ 
Simon Stoilow Institute of Mathematics of the Romanian Academy, P.O. Box 1-764, RO-014700 Bucharest, Romania}
 \email{zaharesc@illinois.edu}

 \begin{abstract}  
We use recent bounds on bilinear sums with modular square roots to study the 
distribution of solutions to congruences $x^2 \equiv p \pmod q$ with primes $p\le P$ and integer $q \le Q$. 
This can be considered as a  combined scenario 
of Duke, Friedlander and Iwaniec with averaging only over the modulus  $q$ and of Dunn, Kerr, Shparlinski and   Zaharescu 
with averaging only over $p$. \end{abstract}

\keywords{prime quadratic residues, modular square roots}
\subjclass[2010]{11K38, 11L07, 11L20}

\maketitle
\tableofcontents

\section{Introduction}

\subsection{Motivation} 

We recall that the celebrated work of  Duke, Friedlander and Iwaniec~\cite{DFI1, DFI2}, see also~\cite{Hom, Toth},  establishes 
 the uniformity of distribution of fractions 
$x(n,q) /q$ formed by all 
solutions to quadratic congruence 
\begin{equation}
\label{eq:cong n}
x^2 \equiv n \pmod q, \qquad 1 \le x \le q\,, 
\end{equation}
for a given integer $n$ and the prime modulus $q$ that
runs up to some bound $q \le Q$. These results have had an enormous number of applications, see
for example~\cite{AhDu, Cil, Dok, DFI2, LiMas, Masr, RSZ}. 

In~\cite{DKSZ} a somewhat dual question has been considered
about the distribution of $x(p,q) /q$ for a fixed prime $q$ when $p$ runs over primes $p\le P$ 
for some parameter $P$, with non-trivial estimates provided that $P \ge q^{2/3+ \varepsilon}$ with some fixed $\varepsilon > 0$.

Here we consider a combined scenario of congruences $x^2 \equiv p \pmod q$ when $p$  varies over primes $p \le P$
and $q$ varies over integers $q \le Q$. 

More precislely, given a prime $q$ and a real parameter $P$ we consider the set $\cR_q(P)$ of 
primes $p \le P$ which are quadratic residues modulo $q$. Following~\cite{DKSZ}, we are
interested in the distribution of solutions to the congruence
$$
x^2 \equiv p \pmod q, \qquad  p \in \cR_q(P) \,. 
$$
Obviously to be able to answer this question one needs good lower bounds on the abundance 
of primes in  $\cR_q(P)$, that is, on the cardinality
$$
R_q(P) = \# \cR_q(P)\,.
$$
Unfortunately, unless $P$ is exponentially large, all known results of this type are conditional on the Generalised Riemann Hypotheis or 
other conjectures on the zero-free regions of $L$-functions, see~\cite{DKSZ}.

Here we  show that a result of~\cite{DKSZ} on  square roots of primes in residue rings 
modulo  $q$ can be improved on average over $q$ and can also be given in a fully unconditional form.
This is based on two  ingredients:
\begin{itemize}
\item  an asymptotic formula for $R_q(P)$ on average over $q$ which follows from a large sieve-type 
result of Heath-Brown~\cite{H-B1} on average values of sums of real characters;

\item  a new bound of bilinear sums with modular square roots of integers from~\cite{SSZ} 
which we couple with the  Heath-Brown identity (see~\cite[Proposition~13.3]{IwKow}) to estimate 
exponential sums with  square roots of primes.
\end{itemize}

As we have noticed, our result is an unconditional  averaged version of a result from~\cite{DKSZ} with averaging 
over the modulus $q$.
It can also be viewed  as an averaged version of  results  of  Duke, Friedlander and Iwaniec~\cite{DFI1, DFI2}, Homma~\cite{Hom}
and~\cite{Toth}, to the scenario when $n$ in~\eqref{eq:cong n}   varies over primes $p \le P$. 

Finally, we mention that the results and methods of~\cite{LSZ1,LSZ2} undoubtedly lead to 
a uniformity of distribution result for solutions to 
 \begin{equation} 
 \label{eq:Mod qk}
x^2 \equiv p \pmod {q^k}, \qquad p \in \cR_q(P)\,, 
\end{equation} 
with a fixed
odd prime $q$ as $k\to \infty$, starting with very short intervals, namely, already for $P\ge q^{\varepsilon k}$
with any fixed $\varepsilon > 0$ (clearly $p \in \cR_q(P)$ is equivalent to the solvability of~\eqref{eq:Mod qk}).

\subsection{New result}
More precisely,   given $\lambda \in \Z_q^\times$,  where $\Z_q^{\times}$ is the unit 
group of the residue ring $\Z_q$ modulo $q$, and two real numbers, we define
$$
\Delta_{\lambda, q} (P)  = \max_{[Y+1, Y+X]\in [1, q-1]}  \left|T_{\lambda, q}(P;X,Y) - \frac{X}{q}  \pi(P)\right| \,,
$$
where a $T_{\lambda, q}(P;X,Y)$ denotes the number of $x \in [Y+1, Y+X]$ with $x^2 \equiv  \lambda p \pmod q$ 
for some prime $p \le P$ and, as usual,  $\pi(P)$ denotes the number of primes $p \le P$. 

In~\cite{DKSZ} the discrepancy $\Delta_{\lambda, q}(P)$ is estimated under the condition that for the given prime $q$, 
the number of prime quadratic 
residues $p \le P$ is close to its expected value $0.5 \pi(P)$. 
Here we take   advantage of averaging over $q~\text{prime}$ and obtain an unconditional 
result with a stronger bound on average. 

\begin{theorem}
\label{thm:disc p Q}  Let  $1\le P \le Q$. Then we have 
$$
\frac{1}{Q} \sum_{\substack{q \le Q\\q~\text{prime}}} \max_{\lambda \in \Z_q^\times} \Delta_{\lambda, q}(P) \le   \(P^{11/12}  +  P^{4/5}  Q^{1/10}\)Q^{o(1)} \,.
$$ 
\end{theorem}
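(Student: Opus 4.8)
The plan is to combine the standard completion technique with the two ingredients highlighted in the introduction. Write $\nu_{\lambda,q}(x)$ for the number of primes $p\le P$ with $x^2\equiv\lambda p\pmod q$ and set $\widehat\nu_{\lambda,q}(h)=\sum_{p\le P}\sum_{x^2\equiv\lambda p}\e_q(hx)$, so that $T_{\lambda,q}(P;X,Y)=\sum_{x=Y+1}^{Y+X}\nu_{\lambda,q}(x)$. Detecting the interval $[Y+1,Y+X]$ by additive characters modulo $q$, with the usual Fourier coefficients of size $O(1/|h|)$, gives
$$
\Bigl|\,T_{\lambda,q}(P;X,Y)-\frac Xq\sum_{x=1}^{q}\nu_{\lambda,q}(x)\,\Bigr|\ll\sum_{1\le |h|<q}\frac{1}{|h|}\,\bigl|\widehat\nu_{\lambda,q}(h)\bigr|.
$$
Since $\sum_{x=1}^{q}\nu_{\lambda,q}(x)=\sum_{p\le P}\bigl(1+\left(\tfrac{\lambda p}{q}\right)\bigr)+O(1)=\pi(P)+\left(\tfrac{\lambda}{q}\right)\sum_{p\le P}\left(\tfrac{p}{q}\right)+O(1)$ and $X\le q$, replacing $\tfrac Xq\sum_x\nu_{\lambda,q}(x)$ by the desired main term $\tfrac Xq\pi(P)$ costs at most $\bigl|\sum_{p\le P}\left(\tfrac pq\right)\bigr|+O(1)$, uniformly in $\lambda$. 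Averaging this over $q\le Q$ and applying the large sieve inequality for real characters of Heath-Brown~\cite{H-B1} together with the Cauchy--Schwarz inequality bounds the corresponding average by $P^{1/2}Q^{o(1)}$, which is negligible against the claimed bound; this is exactly the step that turns the conditional estimate of~\cite{DKSZ} into an unconditional one.

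It remains to bound, on average over prime $q\le Q$, the sums $\widehat\nu_{\lambda,q}(h)$ for $h\ne 0$. Discarding the at most one prime dividing $q$ and using $\sum_{x^2\equiv\lambda p}\e_q(hx)=\e_q\bigl(h\varrho_q(\lambda p)\bigr)+\e_q\bigl(-h\varrho_q(\lambda p)\bigr)$ when $\lambda p$ is a quadratic residue (where $\varrho_q(a)$ denotes a square root of $a$ modulo $q$, the inner sum being empty otherwise), the problem reduces to estimating $\sum_{p\le P}\e_q\bigl(h\varrho_q(\lambda p)\bigr)$. Here I would feed the indicator function of the primes into the Heath-Brown identity~\cite[Proposition~13.3]{IwKow}, expressing it as $O(\log^{2k}P)$ Dirichlet convolutions supported on $p=n_1\cdots n_{2k}$ with the $n_i$ confined to prescribed dyadic ranges. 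Grouping the variables into two blocks and using the multiplicativity $\varrho_q(ab)=\varrho_q(a)\varrho_q(b)$ of the square root (valid after restricting to the residues for which the roots exist, the discarded part being controlled by the quadratic symbol), each piece is cast into the bilinear shape
$$
\sum_{m\sim M}\ \sum_{n\sim N}\alpha_m\beta_n\,\e_q\bigl(h\varrho_q(\lambda mn)\bigr),\qquad MN\asymp P,
$$
with coefficients of controlled $\ell^2$-norm, to which the bound of~\cite{SSZ} applies.

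Finally I would insert the estimate of~\cite{SSZ} for each dyadic piece, sum the resulting bounds against the weights $1/|h|$, and average over the primes $q\le Q$. As is typical for the Heath-Brown identity, the pieces fall into Type I sums, where one variable is long and smooth and the form is effectively linear, and Type II sums, where both variables are rough and of comparable size; the two terms $P^{11/12}$ and $P^{4/5}Q^{1/10}$ in the statement should emerge from balancing these two regimes of the bilinear estimate against the splitting level and the completion cut-off. I expect the main obstacle to be precisely this optimisation: one must check that, for every factorisation produced by the identity, the ranges $M$ and $N$ can be arranged so that~\cite{SSZ} yields a nontrivial saving uniformly for $1\le P\le Q$, and that averaging the resulting bounds over $q\le Q$ loses no more than a factor $Q^{o(1)}$. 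The choice-of-root and coprimality bookkeeping is routine but must be handled carefully so as not to inflate the $\ell^2$-norms entering the bound of~\cite{SSZ}.
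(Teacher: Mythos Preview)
Your outline matches the paper's proof: Erd\H{o}s--Tur\'an reduces the discrepancy to Weyl sums over square roots of primes, Heath-Brown's real-character large sieve replaces $2R_q(P)$ by $\pi(P)$ unconditionally, the Heath-Brown identity produces multilinear sums, and the averaged bilinear bound of~\cite{SSZ} handles the Type~II pieces. Two points, however, are not quite as you describe and would cause trouble if implemented literally.

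First, the identity $\varrho_q(ab)=\varrho_q(a)\varrho_q(b)$ is false: even after restricting to quadratic residues, a fixed choice of square root only satisfies $\varrho_q(a)\varrho_q(b)=\pm\varrho_q(ab)$, so square-rooting is not a homomorphism and you cannot separate variables this way. The paper never chooses a root. The bilinear forms in~\cite{SSZ} are defined as $\sum_m\sum_n\alpha_m\beta_n\sum_{x^2\equiv amn}\e_q(hx)$, keeping the inner sum over both roots, and the Heath-Brown identity is applied directly to $\sum_{k\le P}\Lambda(k)\sum_{x^2\equiv k}\e_q(hx)$, so that the bilinear structure comes from factoring $k$ rather than from factoring the root. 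Your ``choice-of-root bookkeeping'' is not routine; it is avoided entirely.

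Second, the bound of~\cite{SSZ} alone does not cover every range the identity produces. When one smooth variable $N_1$ is very long the paper does not use~\cite{SSZ} at all: it fixes the remaining variables, completes the sum over the long smooth variable, and evaluates the resulting Gauss sum to get $|\Sigma_q(\mathbf V)|\ll (P/N_1)\,q^{1/2}$. This Gauss-sum ingredient is what generates the term $P^{4/5}Q^{1/10}$, via balancing against the bilinear bound at $U=P^{1/5}Q^{2/5}$; the term $P^{11/12}$ comes from the purely bilinear cases, optimised with two further thresholds $L=P^{1/5}$ and $S=P^{1/3}$ (so three parameters rather than a single splitting level). Your instinct that the optimisation is the crux is correct, but you should name the Type~I input explicitly.
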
 

It is easy to see that Theorem~\ref{thm:disc p Q}  is nontrivial  for $P \ge Q^{1/2+ \varepsilon}$ with some fixed $\varepsilon > 0$, 
while for $P=Q$ we get $\Delta_{\lambda, q} (q)  \le    q^{11/12  + o(1)}$, for almost all primes $q$, 
uniformly over $\lambda \in \Z_q^\times$.   

Perhaps considering more cases in the proof of Theorem~\ref{thm:disc p Q} one can obtain a improve the  bound
of Theorem~\ref{thm:disc p Q}.
However our goal has been to have a nontrivial result in a range of $P$ as wide as possible and we believe that 
the above condition $P \ge Q^{1/2+ \varepsilon}$  is the limit of our method. 

Clearly, using  Theorem~\ref{thm:disc p Q} one can provide averaged versions of many applications which rely on  the bound
of Duke, Friedlander and Iwaniec~\cite[Theorem~1.1]{DFI2}; some of them are indicated already in~\cite{DFI2}, some other can 
be found in~\cite{AhDu,  Dok, LiMas, Masr}.

\section{Links to other problems} 

\subsection{Local spacings}
The local spacing distribution of the sequence $n^2 \alpha$ mod 1 for $\alpha$ irrational has been extensively studied in the literature. A classical result of Rudnick and Sarnak states that for all integers $d \ge 2$ and almost all real $\alpha$, the pair correlation of the sequence $n^d \alpha$ mod 1 is Poissonian. This is in contrast with the case $d = 1$, where it is well known 
that for all $\alpha$ and all $N$, the gaps between consecutive elements of fractional parts $\{n\alpha\}$, 
$1 \le n \le N$, can take at most three values.
Returning to the case $d = 2$, Rudnick, Sarnak and Zaharescu~\cite{RuSaZ, Z} have
shown that for sufficiently well approximable numbers $\alpha$, the $m$-level correlations and consecutive spacings are Poissonian along subsequences. For $\alpha = \sqrt{2}$, these types of conjectures are supported 
numerically~\cite{GCI} because of their close connection to the distribution between neighbouring levels of a certain integrable quantum system.

A difficult problem is that of the distribution of local spacings
between consecutive primes. 
Gallagher~\cite{Ga} proves that the sequence of primes has a Poissonian distribution, conditionally under the
assumption of (a uniform version of) an even more famous conjecture, the prime $k$-tuple conjecture.

Let us now take a large prime 
number $q$ and consider two sequences modulo $q$: the sequence of primes up to $q$, and the sequence of
squares of positive integers up to $N$. Suppose $N$ is of the size of $q/\log q$, so that the above two finite
sequences have about the same number of elements. By Gallagher's result~\cite{Ga}, the first sequence 
has a Poissonian distribution,
conditionally under the prime k-tuple conjecture. Unconditionally, by~\cite{RuSaZ}, the second sequence has a Poissonian 
distribution for $N$ of the above size.
Under these circumstances one would naturally expect that if one takes the union of these two sequences, the new
sequence has a Poissonian distribution, too. Thus, for example, the nearest-neighbor distribution should be
exponential: for each fixed $\lambda > 0$, the proportion of gaps between consecutive elements of the sequence
(arranged increasingly in the interval $[1, q]$) should tend to $e^{-\lambda}$, as $q$ tends to infinity. Note that the
distribution problem for this combined sequence introduces new challenges. Thus, if one wants to count
neighbours (pairs of consecutive elements of the sequence) asymptotically, one needs to deal with four types of
pairs: pairs where both elements are primes (counted in \cite{Ga}), pairs where both elements are squares mod $q$
(counted in \cite{RuSaZ}),
 as well as new types of pairs, where one element is a prime and the other is a square. 
 Counting these new types of pairs leads one to study the problem of finding, for each fixed integer $h$, an
 asymptotic formula for the number of solutions to the 
 congruence $n^2 \equiv p + h$ (mod $q$). Here the case $h = 0$ would need to be included, too, and in that case 
 the problem reduces to the congruence discussed in the present paper.

\subsection{Diophantine inequalities}
Diophantine inequalities with primes and respectively with squares have a long history. In the case of 
primes, Matom\"aki \cite{Ma} proved that for
any real irrational number $\alpha$, and any $\varepsilon > 0$,  there are infinitely many prime numbers $p$ for which
$$
\|p\alpha\| <\frac{1}{p^{1/3 - \varepsilon}} \,.
$$
where $\|\xi\|$ denotes the distance between $\xi$ and the closest integer. 

In the case of squares, it is shown in~\cite{Z1} that for any real irrational number $\alpha$ and 
any $\varepsilon > 0$,  there are infinitely many positive integers $n$ for which
$$
\|n^2\alpha\| <\frac{1}{n^{2/3 - \varepsilon}} \,.
$$

The following question naturally arises:
Given a real irrational number $\alpha$ and positive 
integers $P$ and $N$, can one find a prime $p \le P$ and a positive 
integer $n \le N$ such that $p\alpha$ and $n^2\alpha$ are close to each other modulo 1? 

Here one may expect that since $n$ can take $N$ values and $p$ can take about $P/\log P$ values, there
should be a pair $(p, n)$ for which the distance between the fractional part of $p\alpha$ and the
fractional part of $n^2\alpha$ is less than $1/(PN)^{1-\varepsilon}$. Such an expectation is simply false.

Indeed, consider for instance the case $P = N^2$. Then all the differences $p - n^2$ that can appear are
nonzero integers in the interval $[-P, P]$. Recall that Dirichlet's theorem is best possible: almost all real numbers have
{\it Diophantine type\/} exactly 2. For such an $\alpha$, one cannot find nonzero integers $m$ in the
interval $[-P, P]$ for which $\|m\alpha\| < 1/P^{1+\varepsilon}$, and 
therefore one cannot find a pair $(p, n)$ as above for which the distance between the fractional part of $p\alpha$ and the
fractional part of $n^2\alpha$ is less than $1/P^{1+\varepsilon}$.

We remark that for the same real numbers  $\alpha$, (that is, of Diophantine type equal to 2) one can combine~\cite{Ma} with~\cite{Z1} to
conclude that for infinitely many $P = N^2$ as above, there exist pairs $(p, n)$ for which 
$$\|p\alpha - n^2\alpha\| < 1/P^{1/3-\varepsilon}.
$$

To obtain the result one actually makes both $\|p\alpha\|$  and $\|n^2\alpha\|$ smaller than $1/P^{1/3-\varepsilon}.$ This
applies in particular to the case when the given real irrational number $\alpha$ is algebraic, by the {\it Thue--Siegel--Roth theorem\/}.

Let us remark that the above type of questions have connections with some celebrated
unsolved problems involving primes and squares. For example, a well known conjecture of Hardy and Littlewood states
that every large enough positive integer is either a square, or the sum of a prime and a square. Assuming this
holds true, and applying it to $m$ above (or applying it to $2m$ in case $m$ is a square), it 
follows that there is a pair $(p, n)$ such that the fractional part of $p\alpha$ and the 
fractional part of $n^2\alpha$ are either both $O(1/P)$, or both are $1 - O(1/P)$, or they are at distance $O(1/P)$ from being
symmetrically placed with respect to 1/2. 

Another well known 
conjecture of Hardy and Littlewood states that any large enough odd number is the sum of a prime and 2 times a square.
Assuming this conjecture holds true, and applying it to $2m+1$ in a similar way as above, it 
follows that there is a pair $(p, n)$ such that $p\alpha$ and $2n^2\alpha$ are at distance $O(1/P)$ from being
symmetrically placed with respect to $\alpha/2$ modulo 1. Less famous than the celebrated Goldbach conjecture, this
conjecture actually goes back to Goldbach, too. He stated the
conjecture in a letter to Euler dated 18 November 1752. For more on the
history of this problem, the reader is referred to Hodges \cite{Ho}.

Suppose now that $\alpha$ has a higher Diophantine type, and let $b/q$ be a rational number such that
 \begin{equation} 
 \label{eq:type K}
\left| \alpha - \frac{b}{q}\right | < \frac{1}{q^K} \, .
\end{equation} 
Assume $K > 3$. Also, assume that both $P$ and $N$ are smaller than $q$, and do not necessarily 
satisfy $P = N^2$. Now if one tries to find a prime $p$ up to $P$ and a positive
integer $n$ up to $N$ such that $p\alpha$ is close to $n^2\alpha$ modulo 1, then one is
actually forced to restrict themselves to only consider pairs $(p, n)$ for which $n^2 \equiv p \pmod q$.
Indeed, for any other pair $(p, n)$ the numbers $bp$ and $bn^2$ are  incongruent modulo $q$,
so $pb/q$ and $n^2b/q$  differ by at least $1/q$. On the other hand 
$$|p\alpha - pb/q| < P/ q^K < 1/q^{K-1}$$
and similarly 
$$|n^2\alpha - n^2b/q| < N^2/q^K < 1/q^{K-2}\,.
$$ 

With $K > 3$, both the above quantities are much 
smaller than $1/q$. Thus 
$$
\|p\alpha - n^2\alpha\| \gg 1/q\, .
$$

By contrast, each pair $(p, n)$  for which $n^2 \equiv p \pmod q$ automatically
produces a better result :
$$
\|p\alpha - n^2\alpha\| \ll \frac{P+N^2}{q^K} \, .
$$

We end this subsection with the following remark.
Notice that one may be able to improve on this bound by studying the 
distribution of square roots of primes $p$ up to $P$ modulo $q$. This is
directly related to the topic of the present paper. Indeed, a strong bound on the 
discrepancy of such a set of square roots would imply the existence of
such square roots in reasonably short intervals. In particular, it would imply the
existence of numbers $n \le N$, with $N$ reasonably smaller than $q$, with $n^2$
congruent mod $q$ to a prime less than $P$. This is 
achieved in Theorem~\ref{thm:disc p Q}  above, not for every $q$, but for most primes $q$ up to $Q$.
There is however no principal obstacle to extending this result to averaging over all integers $q \le Q$. 
Furthermore, using standard tools of the theory of Diophantine approximations, it is easy to show that for any $K>0$, for a set of $\alpha \in [0,1]$ of 
positive Hausdorff dimension, there are infinitely many approximations~\eqref{eq:type K} with primes $q$.

\section{Preliminaries} 

\subsection{Notation} 

Throughout the paper, the notation $U = O(V)$, 
$U \ll V$ and $ V\gg U$  are equivalent to $|U|\leqslant c V$ for some positive constant $c$, 
which throughout the paper may depend on a small real positive parameter $\varepsilon$. 

For any quantity $V> 1$ we write $U = V^{o(1)}$ (as $V \to \infty$) to indicate a function of $V$ which 
satisfies $|U| \le V^{\varepsilon}$ for any $\varepsilon> 0$, provided $V$ is large enough.

For a sequence of complex weights  $\bgamma = (\gamma_k)_{k=1}^K$ 
and $sigma>0$, we denote 
$$
\| \gamma \|_{\infty}=\max_{k=1, \ldots, K} |\gamma_k| \quad \text{and} \quad \| \gamma \|_{\sigma}= \(\sum_{k=1}^K |\gamma_k|^{\sigma} \)^{\frac{1}{\sigma}} \,.
$$

For a real $A> 0$, we write $a \sim A$ to indicate  that $a$ is in the dyadic interval $A \le  a < 2A$. 

For $\xi \in \R$ and $m \in \N$ we denote
$$\e(\xi)=\exp(2 \pi i \xi) \quad \text{and} \quad \em(\xi) = \exp(2 \pi i \xi/m) \,.
$$
We also use $(k/q)$ to denote the Jacobi symbol of $k$ modulo an odd integer $q\ge 2$.  

We always use the letter $p$, with or without subscript, to denote a prime number.

As usual, for an integer $a$ with $\gcd(a,q)=1$ we define $\overline{a}$ by the conditions
$$
a \overline{a}  \equiv 1 \pmod q \mand \overline{a} \in \{1, \ldots, q-1\} \,.
$$

We also use $\mathbf{1}_{\cS}$ to denote the characteristic function of a set $\cS$ and denote by $|\cS|$ 
the cardinality of this set.
Finally, we recall that 
$$
\sideset{}{^*} \sum \mand \sideset{}{^\sharp} \sum 
$$
mean that the summation is over elements of $\Z_q^\times$ and over odd integers, respectively.

\subsection{Bilinear forms and equidistribution}
Given  $a,h \in \Z_q^{\times}$,   integer numbers $M,N\ge 1$ and complex weights 
$$
\balpha = (\alpha_m)_{m=1}^M  \quad \text{and} \quad \bbeta =  (\beta_n)_{n=1}^N \,,
$$
we consider bilinear forms in Weyl sums for square roots
\begin{equation} \label{Wdef}
W_{a,q}(\balpha, \bbeta; h,M,N) =    \sideset{}{^*}\sum_{m=1}^M   \sideset{}{^*}\sum_{n=1}^N \alpha_{m} \beta_{n} \sum_{\substack{x \in \Z_q \\
x^2 = amn}} \eq(hx) \,, 
\end{equation}
where, as mentioned above, $\sum^*$ means that the summation is over the elements of $ \Z_q^{\times}$.
We also  remark that the equation $x^2 = amn$ in the definition of the sums~\eqref{Wdef} 
is considered in $\Z_q$ and thus is equivalent to the congruence $x^2 \equiv amn \pmod q$. 

The goal is to  improve the trivial bound 
$$ 
W_{a,q}(\balpha, \bbeta; h,M,N) = O\(  \| \balpha \|_1   \| \bbeta \|_1\)\,.
$$
For many applications this is especially important ti achieve below the so-called {\it P{\'o}lya--Vinogradov range\/}, that is, for $M,N \le q^{1/2}$, since 
as  it has been shown by Dunn and Zaharescu~\cite{DuZa}  this leads to a  power saving in 
the error term of an asymptotic formula for a second moment of certain $L$-functions.  

For prime $q$, first nontrivial bounds on  the sums~\eqref{Wdef}  have been given in~\cite{DuZa} and then improved in~\cite[Theorem~1.7]{DKSZ},
as follows 
 \begin{equation} 
 \begin{split}
 \label{eq:Old-1}
 |W_{a,q}(\balpha,\bbeta;h,M,N)|\le  \|\balpha\|_2& \|\bbeta\|_{\infty}^{1/3}  \|\bbeta\|_1^{2/3}   q^{1/8+o(1)} M^{7/24}N^{1/8}\\
 & \qquad\left(\frac{M^{7/48}}{q^{1/16}}+1\right)\left(\frac{N^{7/48}}{q^{1/16}}+1\right)
  \end{split}
\end{equation} 
and
 \begin{equation} 
 \begin{split}
 \label{eq:Old-2}
|W_{a,q}(\balpha, \bbeta; h,M,N)|   \le \| \balpha \|_2  &\| \bbeta \|_1^{3/4}   \| \bbeta \|_{\infty}^{1/4} q^{1/8+o(1)} M^{5/16} N^{1/16}\\
 & \qquad  \(\frac{M^{3/16}}{q^{1/8}}  +1\)  \(\frac{N^{3/16}}{q^{1/8}} +1 \)\,.
  \end{split}
\end{equation} 

Furthermore, in~\cite{SSZ} the bounds~\eqref{eq:Old-1}  and~\eqref{eq:Old-2} have been improved on average over $q$ where 
the averaging involves all odd integers $q$ rather than only primes.

First we note that without loss of generality the weights $\balpha$ and $\bbeta$ 
can be normalized to satisfy 
\begin{equation} \label{eq:norm}
 \| \balpha \|_2 \le M^{1/2}  \mand  \| \bbeta \|_{\infty} \le 1 \,.
 \end{equation} 
 In particular, under the condition~\eqref{eq:norm}, the  bounds~\eqref{eq:Old-1}  and~\eqref{eq:Old-2} become
  \begin{equation} 
 \begin{split}
 \label{eq:OldSimple-1}
|W_{a,q}&(\balpha, \bbeta; h,M,N)| \\ &\le  q^{1/8+o(1)}   (MN)^{19/24}
 \left(\frac{M^{7/48}}{q^{1/16}}+1\right)\left(\frac{N^{7/48}}{q^{1/16}}+1\right) 
 \end{split}
\end{equation}
and
 \begin{equation} 
 \begin{split}
 \label{eq:OldSimple-2}
|W_{a,q}&(\balpha, \bbeta; h,M,N)| \\
  &\le  q^{1/8+o(1)}   (MN)^{13/16}     \(\frac{M^{3/16}}{q^{1/8}}  +1\)  \(\frac{N^{3/16}}{q^{1/8}} +1 \)\,,
 \end{split}
\end{equation} 
respectively.

Recall that for real positive $Q$ we write $q \sim Q$ to indicate $q \in [Q, 2Q)$. 
It is convenient to define 
 \begin{equation} 
 \begin{split}
 \label{eq:BMNQ}
\fB(M,N,Q) =   (MN)^{3/4} Q^{1/8}   &  \(M^{1/4} Q^{-1/8}     + 1\) \\
& \qquad \(N^{1/4} Q^{-1/8} + 1\) \,,
\end{split} 
\end{equation} 
which is  the bound of~\cite{SSZ} on the sums $W_{a,q}(\balpha, \bbeta; h,M,N)$ on average.

More precisely, we now consider the average
value
\begin{align*}
\fA(Q)   = \frac{1}{Q} \sideset{}{^\sharp} \sum_{q \sim Q}    \max_{1\le M,N \le Q}  \ &
\max_{a,h \in \Z_q^{\times}}\\& \quad  \max_{\balpha, \bbeta \ \text{as in~\eqref{eq:norm}}} \( \frac{ \left |W_{a,q}(\balpha, \bbeta; h,M,N)\right|} {\fB(M,N,Q)} \)^4 \,,
\end {align*}
where, as before,  $\sum^\sharp$ means that the summation is over odd integers. 

By a result of~\cite{SSZ}, we have 

\begin{lemma} \label{lem:Waq-Aver}
For  $Q \to \infty $, 
we have 
$$
\fA(Q)= Q^{o(1)} \,.
$$
\end{lemma}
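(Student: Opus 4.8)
The statement to prove is Lemma~\ref{lem:Waq-Aver}, asserting that the fourth-power average $\fA(Q)$ of the normalised bilinear forms $W_{a,q}(\balpha,\bbeta;h,M,N)$ over odd $q \sim Q$ is $Q^{o(1)}$. The plan is to reduce the claim to the bound of~\cite{SSZ} that is already encoded in the definition of $\fB(M,N,Q)$ in~\eqref{eq:BMNQ}, and to show that the three layers of maxima inside the average (over $M,N$, over $a,h$, and over admissible weights $\balpha,\bbeta$) cost only a factor $Q^{o(1)}$ after the fourth power is averaged.

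First I would recall that the result of~\cite{SSZ} provides, after averaging the appropriate moment over $q \sim Q$, exactly a bound of the shape
$$
\sideset{}{^\sharp} \sum_{q \sim Q} \ \left|W_{a,q}(\balpha, \bbeta; h,M,N)\right|^4 \le Q^{1+o(1)} \fB(M,N,Q)^4
$$
for each fixed choice of the parameters $M,N \le Q$, of $a,h \in \Z_q^\times$, and of weights satisfying the normalisation~\eqref{eq:norm}. Dividing by $Q$ and by $\fB(M,N,Q)^4$ shows that the corresponding summand of $\fA(Q)$, for that fixed choice, is $Q^{o(1)}$. The entire content of Lemma~\ref{lem:Waq-Aver} is therefore to upgrade this pointwise-in-parameters estimate to one that survives all the maximisations.

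The key step is to handle the maxima. The ranges of $M$ and $N$ are integers in $[1,Q]$, so there are at most $Q^2$ pairs $(M,N)$; replacing each $\max_{M,N}$ by a sum over dyadic scales $M \sim 2^i$, $N \sim 2^j$ reduces this to $O((\log Q)^2)=Q^{o(1)}$ scales, on each of which $\fB$ is constant up to a constant factor. For $a,h$, since $a,h \in \Z_q^\times$ and $q \sim Q$, there are at most $Q^2$ choices, and one bounds the maximum by the sum, again at the cost of $Q^{O(1)}$ which is absorbed once one notices that the bound of~\cite{SSZ} is in fact uniform in $a,h$ (the square-root counting $x^2=amn$ behaves identically for all units $a$), so no loss is incurred here at all. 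The most delicate maximum is the one over all weights $\balpha,\bbeta$ as in~\eqref{eq:norm}: this is a continuum, not a finite set. I expect the main obstacle to be justifying that the supremum over admissible weights is attained, or controlled, by the bound already proven in~\cite{SSZ}; the resolution is that the estimate of~\cite{SSZ} is itself proved uniformly over all weights normalised by~\eqref{eq:norm}, since it proceeds by Cauchy--Schwarz and completion arguments that only ever use $\|\balpha\|_2 \le M^{1/2}$ and $\|\bbeta\|_\infty \le 1$, not any finer structure of the weights. Thus the supremum over weights is already built into the definition of $\fB$.

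Assembling these, I would write $\fA(Q)$ as a sum over $Q^{o(1)}$ dyadic scales of terms each of which is $Q^{o(1)}$ by the uniform-in-parameters bound of~\cite{SSZ}, giving $\fA(Q)=Q^{o(1)}$. The main technical care lies in verifying that the cited bound from~\cite{SSZ} really is uniform in $a$, $h$, and in the normalised weights, so that the three maxima do not interact with the $q$-average; once that uniformity is granted, the reduction to dyadic ranges is routine and contributes only logarithmic factors absorbed into $Q^{o(1)}$.
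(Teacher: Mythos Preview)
The paper does not give a proof of this lemma: it is stated as a direct quotation of a result of~\cite{SSZ}, with the quantity $\fB(M,N,Q)$ in~\eqref{eq:BMNQ} explicitly introduced as ``the bound of~\cite{SSZ} on the sums $W_{a,q}(\balpha,\bbeta;h,M,N)$ on average''. Your proposal is therefore not so much an alternative proof as an (entirely reasonable) gloss on why the averaged bound of~\cite{SSZ} can be stated in the form $\fA(Q)=Q^{o(1)}$ with all the maxima inside the $q$-sum.

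One remark on your handling of the maxima over $a,h$ and over the weights: the passage ``one bounds the maximum by the sum, again at the cost of $Q^{O(1)}$ which is absorbed once one notices that the bound of~\cite{SSZ} is in fact uniform in $a,h$'' is slightly confused. Bounding the max by the sum over $a,h$ would genuinely cost a factor $Q^2$, which is not absorbed by $Q^{o(1)}$. The correct point (which you also gesture at) is different: the proof in~\cite{SSZ} proceeds by Cauchy--Schwarz and energy estimates applied \emph{inside} the $q$-sum, so the bound holds for an arbitrary $q$-dependent choice of $a_q,h_q,\balpha_q,\bbeta_q$; taking these to be the maximisers for each $q$ gives the max-inside-sum statement directly, at no cost. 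The dyadic reduction in $M,N$ is then the only remaining step, costing $(\log Q)^{O(1)}=Q^{o(1)}$ as you say. With that clarification your sketch is correct, and matches the paper's treatment, which is simply to cite~\cite{SSZ}.
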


In particular, if $M, N \le Q^{1/2}$ then the bound $\fB(M,N,Q)$ in~\eqref{eq:BMNQ}  takes form 
$$
\fB(M,N,Q)  \ll   (MN)^{3/4} Q^{1/8} \,.
$$
This is better than the trivial bound  
provided that $MN \ge Q^{1/2+\varepsilon}$,  for some 
fixed $\varepsilon > 0$, 
while the bounds~\eqref{eq:OldSimple-1}  and~\eqref{eq:OldSimple-2}   
of~\cite{DKSZ} require $MN \ge Q^{3/5+\varepsilon}$
and  $MN \ge Q^{2/3+\varepsilon}$, respectively.

\subsection{Distribution of prime quadratic residues on average}
\label{sec:dist av} 

We use the following immediate implication of a  result of Heath-Brown~\cite{H-B1}
on the overage values of sums of real characters.   

Let $N_q(P)$ be the number of primes $p\le P$ which are quadratic residues modulo $q$.

\begin{lemma}
\label{lem:AvRealChar}
Let  $1\le P \le Q$. Then we have 
$$
\frac{1}{Q}  \sum_{\substack{q \le Q\\q~\mathrm{prime}}} \ \left| N_q(P) - \frac{1}{2} \pi(P)\right| \le  P^{1/2} Q^{o(1)} \,.  
$$
\end{lemma}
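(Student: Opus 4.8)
The plan is to express $N_q(P)$ in terms of the Jacobi/Legendre symbol and reduce the claim to a mean-value estimate for character sums over primes, which is exactly the setting of Heath-Brown's large sieve for real characters. For an odd prime $q$ and a prime $p \neq q$, the indicator that $p$ is a quadratic residue modulo $q$ is $\tfrac12\left(1 + \left(\tfrac{p}{q}\right)\right)$. Summing over primes $p \le P$ with $p \neq q$ gives
\begin{equation*}
N_q(P) = \frac12 \pi(P) + \frac12 \sum_{\substack{p \le P \\ p \neq q}} \left(\frac{p}{q}\right) + O(1),
\end{equation*}
where the $O(1)$ accounts for the possible contribution of $p = q$ (and the case $p = 2$, which I would handle separately or absorb). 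Hence
\begin{equation*}
\left| N_q(P) - \frac12 \pi(P)\right| \le \frac12 \left| \sum_{p \le P} \left(\frac{p}{q}\right)\right| + O(1),
\end{equation*}
and averaging over primes $q \le Q$ reduces the lemma to bounding $\frac{1}{Q}\sum_{q \le Q,\, q\text{ prime}} \left| \sum_{p \le P} \left(\tfrac{p}{q}\right)\right|$.

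Next I would bring in the averaged bound on character sums. By Cauchy--Schwarz applied to the outer average over $q$,
\begin{equation*}
\frac1Q \sum_{\substack{q \le Q \\ q\text{ prime}}} \left| \sum_{p \le P}\left(\frac{p}{q}\right)\right| \le \left(\frac1Q \sum_{\substack{q \le Q \\ q\text{ prime}}} \left|\sum_{p \le P}\left(\frac{p}{q}\right)\right|^2\right)^{1/2},
\end{equation*}
so it suffices to show the mean square over $q$ is $P \cdot Q^{o(1)}$. This is precisely what Heath-Brown's result in~\cite{H-B1} delivers: his large sieve inequality for quadratic characters bounds sums of the shape $\sum_{q} \bigl|\sum_n a_n \left(\tfrac{n}{q}\right)\bigr|^2$ by $(PQ)^{o(1)}(P+Q)\sum_n |a_n|^2$, where here one takes $a_n = \mathbf{1}_{n\text{ prime}}$ for $n \le P$, so $\sum_n|a_n|^2 = \pi(P) \le P$. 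Under the hypothesis $P \le Q$ the factor $P + Q$ is $O(Q)$, giving a mean-square bound of $PQ \cdot Q^{o(1)}$; dividing by $Q$ and taking the square root yields the desired $P^{1/2} Q^{o(1)}$.

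The main technical care is in invoking Heath-Brown's inequality correctly. His theorem is naturally stated with the modulus and the summation variable ranging over \emph{odd squarefree} integers and with the symbol $\left(\tfrac{n}{q}\right)$ interpreted as a Jacobi symbol, so I would need to check that restricting the modulus $q$ to primes (a subfamily) only improves matters, and that restricting $n$ to primes $p \le P$ is a legitimate specialization of the coefficients $a_n$. Primes $p \le P$ are automatically squarefree and, apart from $p = 2$, odd, so the coefficient restrictions are compatible; the single prime $p = 2$ and the diagonal term $p = q$ contribute only $O(1)$ per modulus and hence $O(1)$ to the final average. The one genuine subtlety is the dependence on which variable is the modulus versus the summation index in the $(P + Q)$ factor: one must orient the large sieve so that the $\sum_n |a_n|^2 = \pi(P)$ mass is weighted by $Q$ rather than by $P$, but since the inequality is symmetric up to the $(P+Q)$ factor and $P \le Q$, either orientation yields the stated bound. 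I expect this bookkeeping to be the only obstacle; no new idea beyond the standard quadratic large sieve is required.
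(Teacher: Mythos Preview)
Your proposal is correct and follows essentially the same route as the paper: express $N_q(P)$ via the Legendre symbol, reduce to the average of $\bigl|\sum_{p\le P}(p/q)\bigr|$, apply Cauchy--Schwarz over $q$, and invoke Heath-Brown's large sieve for real characters to bound the mean square by $PQ^{1+o(1)}$. The paper's write-up is terser (it omits the $O(1)$ bookkeeping for $p=q$ and $p=2$ and the discussion of squarefree support), but the argument is identical.
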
 

\begin{proof}
Clearly, 
$$
 N_q(P)  =  \frac{1}{2}    \sum_{p \le P}\ \(\(\frac{p}{q}\)+1\) \,. 
 $$
 Hence 
\begin{equation} \label{eq:NqP}
 \sum_{\substack{q \le Q\\q~\textrm{prime}}} \left| N_q(P) - \frac{1}{2} \pi(P)\right|  \le \sum_{q \le Q} \left|   \sum_{p \le P}\(\frac{p}{q}\)\right| \,.
\end{equation}
On the other hand, a very special case  of~\cite[Theorem~1]{H-B1}
implies 
\begin{equation} \label{eq:H-B Bound}
 \sum_{\substack{q \le Q\\q~\textrm{prime}}} \ \left|   \sum_{p \le P}\(\frac{p}{q}\)\right|^2 \le PQ^{1+o(1)} \,.
\end{equation}
Using  the Cauchy inequality, we see that~\eqref{eq:NqP}  and~\eqref{eq:H-B Bound} imply the desired 
result. 
\end{proof}

\subsection{Exponential sums and discrepancy}

 We recall that  the {\it discrepancy\/} $D(N) $ of a sequence in $\xi_1, \ldots, \xi_N \in [0,1)$    is defined as 
\begin{equation}
\label{eq:Discr}
D_N = \frac{1}{N}\sup_{0\le \gamma \le 1} \left |  \#\{1\le n\le N:~\xi_n\in [0, \gamma)\} - \gamma N \right | \,.
\end{equation} 
We remark that this notion of discrepancy is normalized by the presence of the factor $1/N$. One may also work
with the unnormalized discrepancy, where the factor $1/N$ is missing from the right side of~\eqref{eq:Discr}.
Thus the normalized discrepancy is bounded by 1, the unnormalized discrepancy is bounded by $N$, and
the connection between them is simply that the unnormalized discrepancy equals $N$ times the 
normalized discrepancy.   

We now recall  the classical Erd\H{o}s--Tur\'{a}n inequality which links the discrepancy and 
exponential sums (see, for instance,~\cite[Theorem~1.21]{DrTi} or~\cite[Theorem~2.5]{KuNi}).

\begin{lemma}
\label{lem:ET}
Let $x_n$, $n\in \N$,  be a sequence in $[0,1)$. Then for any $H\in \N$, the discrepancy $D_N$ given by~\eqref{eq:Discr} satisfies
$$
D_N \le 3 \left( \frac{1}{H+1} + \frac{1}{N}\sum_{h=1}^{H}\frac{1}{h} \left| \sum_{n=1}^{N} \e(h\xi_n) \right | \right) \,.
$$
\end{lemma}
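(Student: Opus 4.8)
The plan is to estimate the counting error at a single cutoff $\gamma$ and then take the supremum over $\gamma$. Fix $\gamma\in[0,1]$ and let $\psi_\gamma$ be the $1$-periodic extension of $\mathbf{1}_{[0,\gamma)}$, so that
\[
R(\gamma):=\sum_{n=1}^N\psi_\gamma(\xi_n)-\gamma N
\]
satisfies $D_N=N^{-1}\sup_{0\le\gamma\le 1}|R(\gamma)|$; thus it suffices to bound $|R(\gamma)|$ uniformly in $\gamma$. Its Fourier coefficients are $\widehat{\psi_\gamma}(0)=\gamma$ and, for $h\neq0$, $\widehat{\psi_\gamma}(h)=(1-\e(-h\gamma))/(2\pi i h)$, so that $|\widehat{\psi_\gamma}(h)|\le 1/(\pi|h|)$.

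The one genuinely nontrivial ingredient is to sandwich $\psi_\gamma$ between two real trigonometric polynomials of degree at most $H$. I would invoke the Beurling--Selberg extremal construction, which provides
\[
S^\pm(x)=\sum_{|h|\le H}\widehat{S^\pm}(h)\,\e(hx),\qquad S^-(x)\le\psi_\gamma(x)\le S^+(x)\quad(x\in\R),
\]
whose coefficients approximate those of $\psi_\gamma$ uniformly, $|\widehat{S^\pm}(h)-\widehat{\psi_\gamma}(h)|\le 1/(H+1)$ for all $h$. Together with the bound on $|\widehat{\psi_\gamma}(h)|$ above, this gives $|\widehat{S^\pm}(0)-\gamma|\le 1/(H+1)$ and $|\widehat{S^\pm}(h)|\le 1/(H+1)+1/(\pi|h|)$ for $1\le|h|\le H$. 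Establishing these facts from Beurling's function is the only real work; since it is classical and independent of the rest of the paper, I would cite it rather than reprove it, and I expect this to be the main obstacle.

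Given the majorant, I evaluate at the points $\xi_n$, sum, and separate the frequency $h=0$:
\[
\sum_{n=1}^N\psi_\gamma(\xi_n)\le\sum_{n=1}^N S^+(\xi_n)=\widehat{S^+}(0)\,N+\sum_{1\le|h|\le H}\widehat{S^+}(h)\sum_{n=1}^N\e(h\xi_n).
\]
Subtracting $\gamma N$, using the triangle inequality and pairing $h$ with $-h$ via $|\sum_n\e(-h\xi_n)|=|\sum_n\e(h\xi_n)|$, I obtain
\[
R(\gamma)\le\frac{N}{H+1}+2\sum_{h=1}^H\Bigl(\frac{1}{H+1}+\frac{1}{\pi h}\Bigr)\Bigl|\sum_{n=1}^N\e(h\xi_n)\Bigr|,
\]
and the minorant $S^-$ yields the matching lower bound, hence the same estimate for $|R(\gamma)|$.

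It remains to collect constants. For $1\le h\le H$ one has $1/(H+1)\le 1/h$, so each coefficient $1/(H+1)+1/(\pi h)$ is at most $(1+\pi^{-1})/h$; since the leading coefficient is $1\le 3$ and $2(1+\pi^{-1})<3$, this gives
\[
|R(\gamma)|\le 3\Bigl(\frac{N}{H+1}+\sum_{h=1}^H\frac{1}{h}\Bigl|\sum_{n=1}^N\e(h\xi_n)\Bigr|\Bigr).
\]
Dividing by $N$ and taking the supremum over $\gamma$ gives the claimed inequality. Everything after the extremal-polynomial input is the triangle inequality and tracking of the constant $3$.
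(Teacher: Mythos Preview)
Your proof via the Beurling--Selberg majorant/minorant is correct and is the standard modern route to the Erd\H{o}s--Tur\'{a}n inequality. Note, however, that the paper does not actually prove this lemma at all: it simply records the statement and cites~\cite[Theorem~1.21]{DrTi} and~\cite[Theorem~2.5]{KuNi}. So there is no ``paper's proof'' to compare against; your sketch supplies what the paper deliberately outsources to the literature, and what you wrote is exactly the argument one finds in those references (or in Montgomery's \emph{Ten Lectures}).
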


It  is now  useful to recall the definition of the {\it Gauss sum\/} 
$$
\cG_q(a,b,q)= \sum_{x \in \Z_q} \e_q \(ax^2+bx \), \qquad (a,b) \in\Z_q^\times  \times \Z_q \,.
$$
The standard evaluation~\cite[Theorem~3.3]{IwKow},  for odd 
integer  modulus $q \ge 3$ leads to the formula
\begin{equation} \label{eval}
\cG_q(a,b)=\e_q \(-\overline{4a} b^2 \) \varepsilon_{q} \sqrt{q} \( \frac{a}{q} \) \,, 
\end{equation} 
where 
$$
 \varepsilon_{q} =\begin{cases}
1 \quad & \text{if } q\equiv 1 \pmod 4\,,\\
i  \quad & \text{if } q\equiv -1 \pmod 4 \,.
\end{cases}
$$

We also need the following bound for exponential sums over square roots modulo primes.
Since below $q$ is always prime,  we use the notation of the finite field $\Fq$ of $q$ elements 
instead of $\Z_q$. 

\begin{lemma}
\label{lem:Incom Sqrt}
For a prime $q$, an integer $W \le q$  and 
integers $a$ and $h$ with $\gcd(ah,q)=1$, we have 
$$
 \sum_{w = 1}^W \sum_{\substack{x \in \Fq \\ x^2= aw}} \e_q(hx) \ll q^{1/2+o(1)}\,.
$$
\end{lemma}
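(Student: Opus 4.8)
The plan is to estimate the sum
$$
S = \sum_{w=1}^{W} \sum_{\substack{x\in\Fq\\ x^2=aw}} \e_q(hx)
$$
by switching the order of summation so that $x$ becomes the outer variable. For each $x\in\Fq$, the inner condition $x^2=aw$ determines $w\equiv \overline{a}\,x^2 \pmod q$ uniquely, so the constraint $1\le w\le W$ becomes a constraint that $\overline{a}\,x^2$ lies in a prescribed interval (a ``box'') of length $W$ in $\Fq$. Thus I would first rewrite
$$
S = \sum_{\substack{x\in\Fq}} \e_q(hx)\,\mathbf{1}_{\{\,\overline{a}\,x^2 \bmod q\ \in\ [1,W]\,\}} \,,
$$
and the main task is to bound an incomplete sum of $\e_q(hx)$ over those $x$ whose square (scaled by $\overline a$) lands in a short interval.

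Next I would detect the interval condition using the standard completion technique, expanding the indicator $\mathbf{1}_{[1,W]}$ into additive characters modulo $q$. This introduces a new summation variable $t$ and the usual factor $\min\{W/q,\,1/\|t/q\|\}$ (or equivalently a sum with logarithmic weights $\sum_{|t|<q/2} (1/q)\widehat{\mathbf 1}(t)\,\e_q(t\overline a x^2)$). After this step the inner sum over $x$ takes the shape
$$
\sum_{x\in\Fq} \e_q\bigl(t\overline a\,x^2 + h x\bigr),
$$
which is precisely a complete quadratic Gauss sum. I would invoke the evaluation~\eqref{eval}, which gives that this sum has absolute value exactly $\sqrt q$ whenever $t\overline a\not\equiv 0$, and I would treat the single degenerate term $t\equiv 0$ (giving the full sum $\sum_x \e_q(hx)=0$ since $\gcd(h,q)=1$) separately.

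Assembling the pieces, the bound is $\sqrt q$ times the sum of the completion weights $\sum_{|t|<q/2}\min\{W/q,\,1/\|t/q\|\}$, and that weight sum is $O(\log q)=q^{o(1)}$. This yields $S\ll q^{1/2}\,q^{o(1)}=q^{1/2+o(1)}$, as desired. I expect the only delicate point to be the bookkeeping of the completion step: one must be careful that the change of variable $w\mapsto \overline a x^2$ is a genuine bijection onto its image (which uses $\gcd(a,q)=1$), that the degenerate frequency $t\equiv0$ is correctly isolated, and that the quadratic Gauss sum evaluation applies uniformly in $t$ (its modulus is independent of the linear coefficient $h$). None of these is hard, but getting the logarithmic factor and the degenerate term right is where a careless argument could go astray; everything else is a routine application of completing the sum plus~\eqref{eval}.
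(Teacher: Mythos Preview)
Your proposal is correct and follows essentially the same route as the paper: after swapping the order of summation, the completion step produces exactly the Gauss sums $\cG_q(t\overline a,h)$, the frequency $t=0$ vanishes because $\gcd(h,q)=1$, and~\eqref{eval} bounds the remaining frequencies by $\sqrt q$, with the completion weights contributing the $\log q$ factor. (One small bookkeeping slip: the completion weights should be $\tfrac{1}{q}\min\{W,\,1/\|t/q\|\}$ rather than $\min\{W/q,\,1/\|t/q\|\}$, but this is precisely the sort of detail you flagged, and the conclusion $\sum_t(\text{weight})\ll\log q$ is correct.)
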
 

\begin{proof} Completing the exponential sum  as in~\cite[Section~12.2]{IwKow} gives that
$$
 \sum_{w = 1}^W\sum_{\substack{x \in \Fq \\ x^2=a w}} \e_q(hx) 
  \ll \max_{0 \leq t \leq q-1} \left|\cG_q(t \overline{a},h) \right | \cdot  \log q \,. 
$$
The value $t=0$ does not contribute anything  and for any $t \in\Fq^\times $ use~\eqref{eval}.   
\end{proof}


\section{Proof of Theorem~\ref{thm:disc p Q}}

\subsection{Preliminary discussion} 
We follow closely the approach of~\cite{DKSZ}, however our estimates are slightly 
different, so we present the proof in full detail.  We recall that  in Theorem~\ref{thm:disc p Q}
the modulus $q$ runs through primes. Hence we use $\Fq$ instead of $\Z_q$. 

As in~\cite{DKSZ}, we see that  Lemma~\ref{lem:ET} reduces the    discrepancy 
question  to 
estimating the  exponential sum
$$
S_{q}(h,P)= \sum_{p\le P} \sum_{\substack{x \in \Fq \\ x^2 = p}} \eq(hx)  $$
for $P \le q$. 
Thus our goal is to estimate 
$$
\fS(P,Q) =  \sum_{\substack{q \sim Q\\q~\text{prime}}} \max_{h \in \Fq^\times} \left| S_{q}(h,P)\right| \,. 
$$

In turn,  using partial summation,  one can bound the sums $\fS(T,Q)$ via the sums
\begin{equation}
 \label{eq:SPQ-Lambda}
\widetilde{\fS}(T,Q) =  \sum_{\substack{q \sim Q\\q~\text{prime}}} \max_{h \in  \Fq^\times} \left|  \widetilde{S}_{q}(h,T)\right| \,,
\end{equation}
with $R \le T$, where 
$$
\widetilde{S}_{q}(h,T) = \sum_{k=1}^T \Lambda(k) \sum_{\substack{x \in \Fq \\ x^2 =k}} \eq(hx)\\
$$
and,   as usual, we use
$$
\Lambda(n) =
\begin{cases}
\log p&\quad\text{if $n$ is a power of the prime $p$}\,,\\
 0 &\quad\text{otherwise}\,, 
\end{cases}
$$
to denote the {\it von Mangoldt function\/}.  

Thus our goal is to derive the estimate 
\begin{equation}
 \label{eq:SSPQ-Bound}
\widetilde{\fS}(P,Q) \le   \(P^{11/12}  +  P^{4/5}  Q^{1/10}\)Q^{o(1)} \, .
\end{equation} 

In what follows it is convenient to define 
$$
\rho_q  = 
\max_{1\le M,N \le Q} \
\max_{a,h \in \Z_q^{\times}}\,  
\max_{\balpha, \bbeta \ \text{as in~\eqref{eq:norm}}}\  
\max\left\{1, \frac{ \left |W_{a,q}(\balpha, \bbeta; h,M,N)\right|} {\fB(M,N,Q)} \right\} \,. 
$$

Hence, recalling~\eqref{eq:BMNQ}, and using $(MN)^{1/4} Q^{-1/4} \le P^{1/4}  Q^{-1/4} \ll 1$ we see that for $q \sim Q$ we can always use the bound
\begin{equation}
\begin{split} 
 \label{eq:W rhoq}
&  \left |W_{a,q}(\balpha, \bbeta; h,M,N)\right| \\
& \qquad  \le 
\rho_q    (MN)^{3/4}   Q^{1/8+o(1)}     \(M^{1/4} Q^{-1/8}     + 1\)  \(N^{1/4} Q^{-1/8} + 1\) \\
& \qquad   \le \rho_q    (MN)^{3/4}   Q^{1/8+o(1)}     \(M^{1/4} Q^{-1/8}   + N^{1/4} Q^{-1/8} + 1\) \,.   
\end{split} 
\end{equation}

Our main tool is the bound
\begin{equation}
 \label{eq:sum rhoq}
\frac{1}{Q} \sum_{\substack{q \sim Q\\q~\text{prime}}} \rho_q  \le Q^{o(1)}
\end{equation} 
implied by  Lemma~\ref{lem:Waq-Aver} and the H{\"o}lder  inequality, combined with~\eqref{eq:W rhoq}. 

\subsection{The Heath-Brown identity} 

To estimate the sum~\eqref{eq:SPQ-Lambda} we apply the Heath-Brown identity in the form  given 
by~\cite[Lemma~4.1]{FKM}
(see also~\cite[Proposition~13.3]{IwKow})   as well as a smooth partition of 
unity from~\cite[Lemme~2]{Fouv} (or~\cite[Lemma~4.3]{FKM}).

We also fix three  parameters 
\begin{equation}
 \label{def:UL}
U \ge S \ge  L \ge 1\,.
\end{equation}
 to be optimised later
and define 
\begin{equation}
 \label{def:J}
J = \rf{\log P/\log L} \,.
\end{equation}
We always assume that $L$ exceeds some fixed small power of $q$ so we always have $J \ll 1$.

Now, as in~\cite[Lemma~4.3]{FKM}, we decompose  $\widetilde{S}_q(h,P)$ into a linear combination of $O(\log^{2J} q)$ sums
with coefficients bounded by $O(\log q)$,
\begin{align*}
\Sigma_q(\mathbf{V})=\sum_{m_1, \ldots, m_J=1}^{\infty} &\gamma_1(m_1)\ldots  \gamma_J(m_J)  \sum_{n_1,\ldots , n_J=1}^\infty \\
\\& V_1 \( \frac{n_1}{N_1} \)  \ldots V_J \( \frac{n_J}{N_J} \)
\sum_{\substack{x \in \Fq \\ x^2=m_1 \ldots m_J n_1 \ldots  n_J}} \e_q(hx) \, ,
\end{align*}  
where  
\begin{equation} \label{eq:cond1}
\mathbf{V}=(M_1,\ldots , M_J,N_1,\ldots  ,N_J) \in [1/2,2P]^{2J} 
\end{equation}
is a $2J$-tuple of parameters satisfying
 \begin{equation} \label{eq:cond2}
N_1 \geq \ldots \ge N_J, \quad  M_1,\ldots ,M_J \leq P^{1/J},\quad   P \ll  R  \ll  P 
\end{equation}
(implied constants are allowed to depend on $J$),  with 
 \begin{equation} \label{eq:prod R}
R =  \prod_{i=1}^J M_i \prod_{j =1}^JN_j\,,
\end{equation}  
and
\begin{itemize}
\item the arithmetic functions $m_i \mapsto \gamma_i(m_i)$ are bounded and supported in $[M_i/2,2M_i]$;
\item the smooth functions $x_i \mapsto V_i(x)$ have support in $[1/2,2]$ and for any fixed $\varepsilon> 0$ satisfy
$$
V^{(j)}(x) \ll  q^{j \varepsilon}
$$
for all integers $j \geq 0$, where the implied constant may depend on $j$ and $\varepsilon$. 
\end{itemize} 

We recall that  the notation $a \sim A$ is   equivalent  to $a \in [A/2, 2A)$. Hence we 
can rewrite he sum $\Sigma_q(\mathbf{V})$ in the following form
\begin{align*}
\Sigma_q(\mathbf{V})= \sum_{\substack{m_i \sim M_i,  n_i \sim N_i\\ i =1, \ldots J}}  \gamma_1(m_1)\ldots  \gamma_J(m_J) 
  V_1& \( \frac{n_1}{N_1} \)  \ldots V_J \( \frac{n_J}{N_J} \) \\ 
& \quad 
\sum_{\substack{x \in \Fq \\ x^2=m_1 \ldots m_J n_1 \ldots  n_J}} \e_q(hx)\,. 
\end{align*}  
In particular, we see that the sums $\Sigma_q(\mathbf{V})$ are supported on a finite set. 
We now collect various bounds on the sums $\Sigma_q(\mathbf{V})$ which we derive 
in various ranges of parameters $M_1,\ldots , M_J,N_1,\ldots  ,N_J$ until we cover the whole
range in~\eqref{eq:cond1}.

\subsection{Bounds of multilinear sums} 
To estimate the multilinear sums $\Sigma_q(\mathbf{V})$, we put $N_1$ in ranges which we call ``{\it{small\/}}'', ``{\it{moderate\/}}", ``{\it{large\/}}" and  ``{\it{huge\/}}''.
We further split the ``{\it{moderate\/}}" range in further subranges depending on
``{\it{small\/}}" and ``{\it{large\/}}" values of $N_2$.  
These ranges depend on $L$, $S$ and $U$ in~\eqref{def:UL} and also $P$ and $Q$ and thus in principle  some can be empty depending 
on the choice of $L$, $S$ and $U$.

In order to apply Lemma~\ref{lem:Waq-Aver},  it is convenient to observe that in the  
bound~\eqref{eq:W rhoq} we have
$M^{1/4}Q^{-1/8} + 1 \ll 1$ for $M \ll Q^{1/2}$ and similarly for the other term involving $N$. 
It is also convenient to assume that 
\begin{equation}
 \label{eq: P large}
P\ge Q^{1/2},
\end{equation} 
as otherwise the bound of Theorem~\ref{thm:disc p Q} is trivial.  
 
\Case{I}{Small $N_1$} 
First we consider the case when 
\begin{equation}
 \label{eq: small N1}
 N_1  \le L \,.
\end{equation}

From the definition of $J$ in~\eqref{def:J} and the condition~\eqref{eq:cond2}  we see that 
\begin{equation}
 \label{eq: small M}
M_1,\ldots ,M_J \leq  L \,.
\end{equation}

We see that if~\eqref{eq: small N1} holds then we 
can  choose two arbitrary sets $\cI, \cJ \subseteq\{1, \ldots, J\}$ such that for 
$$
M = \prod_{i\in \cI} M_i \prod_{j \in \cJ} N_j \mand N = R/M \,, 
$$
where $R$ is given by~\eqref{eq:prod R} we have 
 \begin{equation}
 \label{eq: N q12}
P^{1/2} \ll N \ll L^{1/2}P^{1/2} \,. 
\end{equation}
Indeed, we simply start multiplying consecutive elements of the sequence $M_1,\ldots , M_J,N_1,\ldots  ,N_J$
until their product $R_+$ exceeds $P^{1/2}$ while the previous product $R_- < P^{1/2}$. 
Since by~\eqref{eq: small N1} and~\eqref{eq: small M} each factor is at most $L$, we have $R_+ < L R_-$.  
Hence  
\begin{itemize}
\item either we have $P^{1/2} \le R_+ \le P^{1/2}L^{1/2}$ and then we set 
 $M= R/R_+$ and $N = R_+ $;
\item  or we have $P^{1/2} >R_- > L^{-1/2} P^{1/2}$ and then we set 
$M = R_-$ and $N= R/R_-$, where $R$ is given by~\eqref{eq:prod R}. 
\end{itemize}
Hence in either case  the corresponding  $N$ satisfies the 
upper bound in~\eqref{eq: N q12}.  
In this case, since for $N \gg P^{1/2}$ we have $M \ll P/N \ll  P^{1/2} \ll Q^{1/2}$,
recalling~\eqref{eq:W rhoq}, we have 
\begin{equation}
\begin{split}
 \label{eq: Bound1}
|\Sigma_q(\mathbf{V})| & \le \rho_q  P^{3/4}    Q^{1/8+o(1)}   \(L^{1/8} P^{1/8}Q^{-1/8}+1\) \\
& = \rho_q  \(L^{1/8}  P^{7/8}     + P^{3/4}    Q^{1/8}\)Q^{o(1)}  \,. 
\end{split}
\end{equation}

 \Case{II}{Moderate $N_1$} 
 We now  consider the case 
 \begin{equation}
 \label{eq: med N1}
L < N_1  \le S \,.
\end{equation}
where we now assume that 
 \begin{equation}
 \label{eq: small S}
S \le P^{1/2}\,. 
\end{equation}
We further split it into two subcases, depending on the size of $N_2$.

 \Subsubcase{II.1}{Moderate $N_1$ and small $N_2$} 
If we have 
$$
S  \ge  N_1\ge  L > N_2 \,, 
$$
then we again start multiplying $N_1$ by other elements from  the sequence $M_1,\ldots , M_J,N_2,\ldots  ,N_J$
and using that~\eqref{eq: P large} guarantees that in this range, using~\eqref{eq: small S}, we have
$$
 \prod_{i=1}^J M_i \prod_{j =2}^JN_j = R/N_1 \ge  P/S \ge P^{1/2}
 $$
we can  prepare some integers $M$ and $N$ with~\eqref{eq: N q12}.
Therefore, we  again have the bound~\eqref{eq: Bound1}.  

 \Subsubcase{II.2}{Moderate $N_1$   and $N_2$} 
 It remains to consider the case when 
$$
S \ge  N_1\ge N_2 \ge L \,.
$$

In this case, we define $M$ and $N$ as
$$
M =  \prod_{i=1}^J M_i \prod_{j =3}^JN_j \mand   N =  N_1 N_2
$$
thus we have 
$$
S^2 \ge  N \ge L^2 \,.
$$
We also note that 
\begin{align*}  
 M^{1/4} Q^{-1/8}  + N^{1/4} Q^{-1/8}   
 &  \ll (P/N) ^{1/4} Q^{-1/8}  + N^{1/4} Q^{-1/8} \\
 &  \ll (P/L^2) ^{1/4} Q^{-1/8}  + Q^{-1/8} S^{1/2}  \\
 &  =  L^{-1/2} P^{1/4} Q^{-1/8}  +  Q^{-1/8} S^{1/2}   \,.
 \end{align*} 
Hence,   the bound~\eqref{eq:W rhoq},  implies
\begin{equation}
\begin{split} 
 \label{eq: Bound22a}
|\Sigma_q(\mathbf{V})| &  \le  \rho_q    P^{3/4}   Q^{1/8}   \(L^{-1/2} P^{1/4} Q^{-1/8}+ Q^{-1/8}  S^{1/2} +1\)\\
& =  \rho_q\(L^{-1/2} P  +   P^{3/4}S^{1/2}+   P^{3/4}   Q^{1/8}\)  Q^{o(1)} \,. 
\end{split} 
\end{equation}
We now observe that~\eqref{eq: Bound22a} is trivial when~\eqref{eq: small S} fails, so do 
not have to restrict $S$ to~\eqref{eq: small S} anymore.

 \Case{III}{Large $N_1$} 
In the case when 
  \begin{equation}
 \label{eq: large N1}
U \ge  N_1 \ge S
\end{equation}   
we set 
$$
M = \prod_{i=1}^J M_i \prod_{j =2}^JN_j \mand N =   N_1 \,.
$$
With the above choice, under the condition~\eqref{eq: large N1}, we  have the bounds
\begin{align*}  
 M^{1/4} Q^{-1/8}  + N^{1/4} Q^{-1/8}   
 &  \ll (P/N) ^{1/4} Q^{-1/8}  + N^{1/4} Q^{-1/8}  \\
 &  \ll (P/S ) ^{1/4} Q^{-1/8}  + U^{1/4} Q^{-1/8} \\
 &  =  P^{1/4} Q^{-1/8} S^{-1/4} + U^{1/4} Q^{-1/8}  \,.
 \end{align*} 
 Therefore, we see that the bound~\eqref{eq:W rhoq}  implies
\begin{equation}
\begin{split} 
 \label{eq: Bound3}
|\Sigma_q(\mathbf{V})| & \le    \rho_q    P^{3/4}  Q^{1/8+o(1)}  \(P^{1/4} Q^{-1/8} S^{-1/4} + U^{1/4} Q^{-1/8} + 1\)  \\
 & \le    \rho_q   \(  P  S^{-1/4}  +  P^{3/4} U^{1/4}   + P^{3/4}  Q^{1/8}\)    Q^{o(1)} \,.
\end{split} 
\end{equation}

\Case{IV}{Huge $N_1$} 
 We now consider the case  when
  \begin{equation}
 \label{eq: huge N1}
U  <N_1 \le  P \,.
\end{equation}  
In this case, via partial summation  and an application of  Lemma~\ref{lem:Incom Sqrt},  exactly as
in~\cite{DKSZ} 
\begin{equation}
\begin{split} 
 \label{eq: Bound4}
|\Sigma_q(\mathbf{V})| & \le M_1 \ldots  M_J N_2 \ldots  N_J  q^{1/2+o(1)}  \\
&= P  N_1^{-1}   q^{1/2+o(1)} \le  \rho_qP  U^{-1}   Q^{1/2+o(1)} \,.
\end{split} 
\end{equation}

\subsection{Optimisation} 
We observe that  the bounds~\eqref{eq: Bound1},   \eqref{eq: Bound22a},  
 \eqref{eq: Bound3}  
and~\eqref{eq: Bound4} cover all four possiblee ranges of $N_1$ given 
by~\eqref{eq: small N1}, \eqref{eq: med N1},  \eqref{eq: large N1} and~\eqref{eq: huge N1}. 

We now choose   $L$  to balance its contribution to  
the bounds~\eqref{eq: Bound1} and~\eqref{eq: Bound22a}.
This leads us to the equation
$$ 
L^{1/8}  P^{7/8}  =    L^{-1/2} P \,.
 $$
Thus, we choose
$$
L= P^{1/5} \,, 
$$
in which case the bound~\eqref{eq: Bound22a} always dominatess~\eqref{eq: Bound1} (as it has one extra term) and hence both can be 
combined as 
\begin{equation}
 \label{eq: BoundOpt1}
|\Sigma_q(\mathbf{V})|  \le  \rho_q  \(P^{9/10} +  P^{3/4}S^{1/2}+  P^{3/4}    Q^{1/8}\)Q^{o(1)}  \,. 
\end{equation}

We also  have $J \ll 1$ as required. 

We now choose $S$  balance its contribution to  
the bounds~\eqref{eq: Bound3} and~\eqref{eq: BoundOpt1}. That is, we chose it as $S= P^{1/3}$ 
from the equation 
$$
 P  S^{-1/4} = P^{3/4}S^{1/2}\,.
 $$
 Hence the  bounds~\eqref{eq: Bound3} and~\eqref{eq: BoundOpt1} (after discarding the term $P^{11/12}$) can now be combined as 
 \begin{equation}
 \label{eq: BoundOpt2}
|\Sigma_q(\mathbf{V})|  \le  \rho_q  \(P^{11/12}+   P^{3/4} U^{1/4}  + P^{3/4}    Q^{1/8}\)Q^{o(1)}  \,. 
\end{equation}

We also choose $U$ to balance its contribution to  
the bounds~\eqref{eq: Bound4} and~\eqref{eq: BoundOpt2}.
This leads us to the equation
$$ 
P  U^{-1}   Q^{1/2} = P^{3/4} U^{1/4} \,.
$$
Thus, we choose
$$
U= P^{1/5} Q^{2/5} \,,
$$
in which case the bound~\eqref{eq: BoundOpt2} dominates~\eqref{eq: Bound4}  and both can be combined as 
\begin{equation}
 \label{eq: BoundOpt3}
|\Sigma_q(\mathbf{V})|  \le  \rho_q   \(P^{11/12} +  P^{4/5}  Q^{1/10}  + P^{3/4}  Q^{1/8}\)    Q^{o(1)}    \,. 
\end{equation}

We now note that under the condition~\eqref{eq: P large} we have
$$
 P^{4/5}  Q^{1/10}  \ge  P^{3/4}  Q^{1/8} \,.
 $$
Therefore the bound~\eqref{eq: BoundOpt3}  
implies that 
\begin{equation}
 \label{eq: BoundOpt4}
|\Sigma_q(\mathbf{V})|  \le  \rho_q  \(P^{11/12}  +  P^{4/5}  Q^{1/10} \)Q^{o(1)} 
\end{equation}
for all  data $\mathbf{V})$ and hence we obtain~\eqref{eq:SSPQ-Bound}.

\subsection{Concluding the proof} 

Combining the bound~\eqref{eq:SSPQ-Bound} with~\eqref{eq:sum rhoq} and Lemma~\ref{lem:ET}, 
we get
\begin{equation}
\begin{split} 
 \label{eq: Discr}
\frac{1}{Q} \sum_{\substack{q \sim Q\\q~\text{prime}}}  
\max_{[Y+1, Y+X]\in [1, q-1]}  & \left|T_{\lambda, q}(P;X,Y) - \frac{2X}{q}  N_q(P) \right| \\
& \quad   \le   \(P^{11/12}  +  P^{4/5}  Q^{1/10}\)Q^{o(1)} \,,
\end{split} 
\end{equation}
where $N_q(P)$ is as in Section~\ref{sec:dist av}, that is,  the number of primes $p\le P$ 
which are quadratic residues modulo $q$. Note that each prime counted by $N_q(P)$ contributes 
two values of $x$. 

Together with  Lemma~\ref{lem:AvRealChar},  the bound~\eqref{eq: Discr} concludes the proof.

\section{Possible generalisations}

It is natural to ask whether our results and methods can be used to treat higher degree roots of primes, that is, 
to ask about the distribution of roots of congruences
$$
x^d \equiv p \pmod q, \qquad  p \in \cR_q(P) \,, 
$$
with an integer $d \ge 3$.

To address this question, we recall that one of the crucial ingredients in the proof of Theorem~\ref{thm:disc p Q} 
is a result of Heath-Brown~\cite{H-B1} on average values of sums of real characters. 
Similar, albeit weaker, results are also known for cubic and quartic characters, see~\cite{BaYo,GaZh,H-B2},
however we are unaware of any result for higher order characters. This can limit the abilities of what one can 
realistically hope to prove nowadays to $d \le 4$, unless one assumes the Generalised Riemann Hypothesis, 
which instantly gives such a necessary result for each $q$ (without any need for averaging), see~\cite[Sections~5.8 and~5.9]{IwKow}. 

The second ingredient is provided by bounds of bilinear sums with roots which in turn is based on
bounds on the {\it additive energy\/} of roots. The case of square-roots allows a special treatment, see~\cite{DKSZ,SSZ}, 
however higher degree roots can be studied as well. To illustrate this we consider the congruence with cubic roots 
\begin{equation}
 \label{eq: Cube}
x+y \equiv a \pmod q, \qquad  x^3, y^3 \in [1,N]\,, 
\end{equation}
where the cubes are computed modulo $q$.  From~\eqref{eq: Cube} we derive 
$$
a^3 \equiv (x+y)^3 = x^3 + y^3 + 3axy \pmod q
$$
and then 
$$
27a^3 x^3y^3  \equiv \(a^3 - x^3 - y^3\)^3 \pmod q\,.
$$
Denoting $U = x^3y^3$ and $V = x^3 + y^3$ we arrive to the congruence 
$$
\(a^3 - V\)^3   \equiv  27a^3 U  \pmod q
$$
with $V \in [1, 2N]$, $U \in [1, N^2]$ to which, provided $N^2 = o(p)$, the methods of~\cite{Chang,CCGHSZ}
can be applied. Quite to the contrary to above limitation $ d\le 4$, we believe that this part can 
be extended to arbitrary $d \ge 2$.

\section*{Acknowledgement} 

The work of I.E.S. was supported in part  by the Australian Research Council Grant~DP170100786. 


\begin{thebibliography}{99}


\bibitem{AhDu}
S. Ahlgren and A. Dunn, `Maass forms and the mock theta function $f(q)$', {\it Math. Ann.\/}, {\bf 374} (2019),  1681--1718.

\bibitem{BaYo}
S. Baier and M. P. Young, 
`Mean values with cubic characters',
{\it J. Number Theory\/}, {\bf   130} (2010),   879--903. 

\bibitem{Chang} M.-C. Chang, `Sparsity of the intersection of polynomial
images of an interval', {\it Acta Arith.\/}, {\bf 165} (2014), 243--249.

\bibitem{CCGHSZ} M.--C. Chang, J. Cilleruelo, M. Z. Garaev, J.  Hern\'andez,
I. E. Shparlinski and A. Zumalac\'{a}rregui,
`Points on curves in small boxes and applications',
{\it Michigan Math. J.\/}, {\bf 63} (2014), 503--534.

\bibitem{Cil} 
J.  Cilleruelo, `The least common multiple of a quadratic sequence', {\it Compos. Math.\/}, {\bf 47} (2011), 1129--1150.


\bibitem{Dok}
D. {\Dbar}oki{\'c}, 
`A note on the distribution of angles associated to indefinite integral binary quadratic forms', 
{\it  Czechoslovak Math. J.\/}, {\bf  69} (2019),   443--452. 

\bibitem{DrTi} M.\ Drmota and R.\ Tichy,
{\it Sequences, discrepancies and applications\/},
Springer-Verlag, Berlin, 1997.

\bibitem{DFI1} 
W. Duke, J. Friedlander and H. Iwaniec, `Equidistribution of roots of a quadratic congruence to prime moduli', {\it Ann. of Math.\/}, {\bf 141} (1995),   423--441. 

\bibitem{DFI2} 
W. Duke, J. Friedlander and H. Iwaniec, `Weyl sums for quadratic roots', 
{\it  Int. Math. Res. Not.\/}, {\bf 2012} (2012), 2493--2549. 

\bibitem{DKSZ} A. Dunn, B. Kerr, I. E. Shparlinski and A. Zaharescu, `Bilinear forms in Weyl sums for modular square roots and applications', 
{\it Adv. Math.\/},  {\bf 375} (2020),  Art.107369.   

\bibitem{DuZa} A. Dunn and A. Zaharescu, `The twisted second moment of modular half integral weight  $L$-functions', 
{\it Preprint\/}, 2019,   \url{http://arxiv.org/abs/1903.03416}.

\bibitem{Fouv} {\'E}. Fouvry,
`Sur le probl{\'e}me des diviseurs de Titchmarsh',
{\it J. Reine Angew. Math.\/},  {\bf 357} (1985),  51--76.

\bibitem{FKM} 
{\'E}. Fouvry,  E. Kowalski and  P.  Michel,  `Algebraic trace functions over the primes',
{\it Duke Math. J.\/}, {\bf 163} (2014),  1683--1736.

\bibitem{Ga}
P. X. Gallagher, `On the distribution of primes in short intervals', {\it Mathematika\/}, {\bf 23} (1976), 4--9.

\bibitem{GaZh} P. Gao and L. Zhao, `Large sieve inequalities for quartic charactersh',
{\it Quart. J. Math.\/}, {\bf 63} (2012),   891--917.

\bibitem{GCI}
I. Guarneri, G. Casati and F. M. Izrailev, `Statistical properties of the quasi-energy spectrum of a simple 
integrable system', {\it Phys. Lett. A\/} {\bf 124} (1987), 263--266.


\bibitem{H-B1} D. R. Heath-Brown,
`A mean value estimate for real character sums',
{\it Acta Arith.\/}, {\bf 72} (1995), 235--275.

\bibitem{H-B2} D. R. Heath-Brown,
`Kummer's conjecture for cubic Gauss sums',
{\it Israel J. Math.\/}, {\bf 120} (2000),  97--124.

\bibitem{Ho}
L. Hodges, `A lesser-known Goldbach conjecture', {\it Math. Mag.\/}, {\bf 66} (1993), 45--47.

\bibitem{Hom}
K. Homma, `On the discrepancy of uniformly distributed roots of qua- dratic congruences',
{\it J. Number Theory\/}, {\bf  128} (2008),   500--508. 

\bibitem{IwKow} H. Iwaniec and E. Kowalski,
{\it Analytic number theory\/}, Amer.  Math.  Soc.,
Providence, RI, 2004.

\bibitem{KuNi}
L. Kuipers and H. Niederreiter, {\it Uniform distribution of
sequences\/}, Wiley-Intersci., New York-London-Sydney, 1974.

\bibitem{LSZ1} K. Liu, I. E. Shparlinski and T. P. Zhang,
 `Divisor problem in arithmetic progressions modulo a prime power',
{\it Adv. Math.\/} {\bf  325} (2018), 459--481.
 

\bibitem{LSZ2}   K. Liu, I. E. Shparlinski and T. P. Zhang, 
 `Cancellations between  Kloosterman sums  modulo a prime power
with prime arguments', {\it Mathematika\/},  {\bf 65} (2019,  475--487.


\bibitem{LiMas}
S.-C. Liu and R. Masri, `Hybrid bounds for quadratic Weyl sums and arithmetic applications',
{\it Forum Math.\/}, {\bf  27} (2015),  3397--3423. 

\bibitem{Masr}
R. Masri, `The asymptotic distribution of traces of cycle integrals of the $j$-function',
{\it Duke Math. J.\/}, {\bf 161} (2012),  1971--2000. 

\bibitem{Ma}
K. Matom\"aki, `The distribution of $\alpha p$ modulo one', {\it Math. Proc. Cambridge Philos. Soc.\/}, {\bf 147} (2009), 267--283.

%

\bibitem{RuSaZ}
Z. Rudnick, P. Sarnak, and A. Zaharescu, `The distribution of spacings between the 
fractional parts of $n^2\alpha$', {\it Invent. Math.\/}, {\bf 145} (2001), 37--57. 


\bibitem{RSZ}
J. Ru{\'e},  P. {\v S}arka, and A. Zumalac{\'a}rregui,  
`On the error term of the logarithm of the lcm of a quadratic sequence', 
{\it J. Th{\'e}or. Nombres Bordeaux\/}, {\bf  25} (2013),  457--470. 

\bibitem{SSZ} I. D. Shkredov, I. E. Shparlinski  and A. Zaharescu, `Bilinear forms with 
modular square roots and averages of twisted second moments of half integral weight $L$-functions', 
{\it Preprint\/}, 2019.

\bibitem{Toth}
A. T{\'o}th, `Roots of quadratic congruences', 
{\it  Int. Math. Res. Not.\/}, {\bf 14} (2000), 719--739. 

\bibitem{Z1}
A. Zaharescu, `Small values of $n^2\alpha \pmod 1$', {\it Invent. Math.\/}, {\bf 121} (1995), 379--388.

\bibitem{Z}
A. Zaharescu, `Correlation of fractional parts of $n^2\alpha$', {\it Forum Math.\/}, {\bf 15} (2003), 1--21.


\end{thebibliography}

%

\end{document}